\def\BibTeX{{\rm B\kern-.05em{\sc i\kern-.025em b}\kern-.08em
    T\kern-.1667em\lower.7ex\hbox{E}\kern-.125emX}}
\newtheorem{thm}{Theorem}
\newtheorem{pro}{Property}
\newtheorem{defn}{Definition}
\newtheorem{corollary}{Corollary}
\begin{document}

\title{Convolution and correlation theorems for the windowed offset linear canonical transform*\\
\thanks{This work is supported by the National Natural Science Foundation of China (No. 61671063), and also by the Foundation
for Innovative Research Groups of the National Natural Science Foundation of China (No. 61421001).}
}

\author{\IEEEauthorblockN{ Wen-Biao, Gao}
\IEEEauthorblockA{\textit{School of Mathematics and Statistics} \\
\textit{Beijing Institute of Technology}\\
Beijing 102488, China \\
}
\and
\IEEEauthorblockN{ Bing-Zhao, Li}
\IEEEauthorblockA{\textit{Beijing Key Laboratory on MCAACI} \\
\textit{Beijing Institute of Technology}\\
Beijing 102488, China \\
Email address:li$\_$bingzhao@bit.edu.cn}
}

\maketitle

\begin{abstract}
In this paper, some important properties of the windowed offset linear canonical transform (WOLCT) such as shift, modulation and
orthogonality relation are introduced. Based on these properties we derive the convolution and correlation theorems
for the WOLCT.
\end{abstract}

\begin{IEEEkeywords}
Windowed offset linear canonical transform, Convolution, Correlation
\end{IEEEkeywords}

\section{Introduction}
The offset linear canonical transform (OLCT) \cite{b1,b2,b4,b5,b12,b13} is a generalized version of the linear canonical transform (LCT) with four parameters $(a, b, c, d)$ \cite{b6,b7,b8,b9,b10,b11}. It is a six parameter $(a, b, c, d, u_{0}, w_{0})$ class of linear integral transform. Because of the two extra parameters, time shifting $u_{0}$ and frequency modulation $w_{0}$, the OLCT are more general and flexible than the LCT. It has been widely applied in signal processing and optics \cite{b7,b8,b9,b11}.

As a mathematical operation, the convolution provides some applications in pure and applied mathematics such as numerical linear algebra, numerical analysis, and signal processing \cite{b18,b19}. Correlation is similar to convolution and it is another useful operation in signal
processing, optics and detection applications \cite{b20,b21,b22}. In some domains such as the LCT domain \cite{b6,b3,b13}, Wignar-Ville transform domain \cite{b8,b12,b1} and the OLCT domain \cite{b12,b1}, the convolution and correlation operations have been studied. They are the most fundamental and important theorems in these domains.

In \cite{b14,b15,b17}, the window function based on linear canonical transform (WLCT) were presented. It is believed to be a new and important signal processing tool. They obtained some main properties such as covariance property, shift, modulation, orthogonality property and inversion formulas.
The results of the WLCT have been well applied. Although the windowed offset linear canonical transform (WOLCT)\cite{b23} has been proposed, some properties of WOLCT has not been studied. The purpose of this paper is to study the WOLCT. Some properties of WOLCT are obtained and its convolution and correlation theorems are derived. The results are very important application in some fields such as digital signal and image processing.

In this paper we first review the OLCT. Next, we introduce the definition of the WOLCT, and obtain some important properties such as linearity, inversion formula and parity. Finally, we present the convolution and correlation theorems for the WOLCT.

\section{Preliminary.}

Let us briefly review some basic properties.

The Lebesgue space $L^{2}(\mathbb{R})$ is defined as the space of all measurable functions on $\mathbb{R}$ such that
\begin{align}
		\|f\|_{L^{2}(\mathbb{R})}=\left(\int_{\mathbb{R}}|f(t)|^{2}\rm{d}\mit t\right)^{\frac{1}{2}}<\infty
	\end{align}
Now we introduce an inner product of the functions $f,g$ defined on $L^2(\mathbb{R})$ is given by
\begin{align}
	\langle f,g\rangle_{L^2(\mathbb{R})}=\int_{\mathbb{R}}f(t)\overline{g(t)}{\rm d}\mit t.
\end{align}
\begin{defn}(OLCT) \cite{b1}
Let $A=(a,b,c,d,u_{0},w_{0})$ be a matrix parameter satisfying $a,b,c,d,u_{0},w_{0}\in \mathbb{R}$, and $ad-bc=1$. The OLCT of a signal $f(t)\in L^{2}(\mathbb{R})$ is defined by
\begin{align}
		\begin{split}
			O_{A}f(u)=O_{A}[f(t)](u)=\begin{cases}
		\int_{-\infty}^{+\infty}f(t)K_{A}(t,u)\rm{d}\mit t,   &b\neq0  \\
		\sqrt{d}e^{i\frac{cd}{2}(u-u_{0})^{2}+iuw_{0}}\\
 \times f(d(u-u_{0})),    &b=0
		\end{cases}
		\end{split}
	\end{align}
where
\begin{align}
\begin{split}
	K_{A}(t,u)&=\frac{1}{\sqrt{i2\pi b}}e^{i\frac{a}{2b}t^{2}-i\frac{1}{b}t(u-u_{0})-i\frac{1}{b}u(du_{0}-bw_{0})}\\
&\times e^{i\frac{d}{2b}(u^{2}+u_{0}^{2})}
\end{split}
\end{align} \end{defn}
From definition 1 it can be seen that for case $b = 0$ the OLCT is simply a time scaled version off multiplied by a linear chirp. Hence, without loss of generality, we assume $b\neq 0$. If $u_{0}=0$ and $w_{0}=0$, the OLCT reduces to the LCT \cite{b1,b3,b5,b7}.

The inverse of an OLCT with parameters $A=(a,b,c,d,u_{0},w_{0})$ is given by an OLCT with parameters $A^{-1}=(d,-b,-c,a,bw_{0}-du_{0},cu_{0}-aw_{0})$. The exact inverse OLCT expression is \cite{b16}
\begin{align}
\begin{split}
	f(t)=O_{A^{-1}}(O_{A}f(u))(t)&=e^{i\frac{cd}{2}u_{0}^{2}-iadu_{0}w_{0}+i\frac{ab}{2}w_{0}}\\
&\times \int_{-\infty}^{+\infty}O_{A}f(u)K_{A^{-1}}(u,t)\rm{d}\mit u,
\end{split}
\end{align}
Next, we introduce one of important properties for the OLCT, its generalized
Parseval formula \cite{b4,b5}, as follows:
\begin{align}
	\int _{\mathbb{R}}f(t)\overline{g(t)}\rm{d}\mit t=\int _{\mathbb{R}}O_{A}\mit f(u))\overline{O_{A}\mit g(u))}\rm{d} \mit u
\end{align}
\section{The windowed offset linear canonical transform}
Here we give definition of the WOLCT, then introduce some properties.
\begin{defn} (WOLCT) \cite{b23} Let $\phi\in L^2(\mathbb{R})\backslash \{0\}$ be a window function. The WOLCT of a signal $f\in L^2(\mathbb{R})$ with respect to $\phi$ is defined by
\begin{align}
	V^{A}_{\phi}f(u,w)=\int_{\mathbb{R}}f(t) \overline{\phi(t-w)} K_{A}(t,u)\rm{d}\mit t
	\end{align}	
where $K_{A}(t,u)$ is given by (4).
\end{defn}
For a fixed $w$, we have
\begin{align}
		V^{A}_{\phi}f(u,w)=O_{A}[f(t)\overline{\phi(t-w)}](u)
	\end{align}
Using the inverse OLCT to (8), we have
\begin{align}
		\begin{split}
		f(t)\overline{\phi(t-w)}&=O_{A^{-1}}(V^{A}_{\phi}f(u,w))(t)\\
&=e^{i\frac{cd}{2}u_{0}^{2}-iadu_{0}w_{0}+i\frac{ab}{2}w_{0}}\\
&\times\int_{-\infty}^{+\infty}V^{A}_{\phi}f(u,w)K_{A^{-1}}(u,t)\rm{d}\mit u,
		\end{split}
	\end{align}
Some basic properties of the WOLCT are summarized in the following theorem.
\begin{pro}[Linearity]
 Let $\phi\in L^2(\mathbb{R})\backslash \{0\}$ be a window function and $f,g\in L^2(\mathbb{R})$ the WOLCT is a linear operator, namely,
  \begin{align}
		\left[V^{A}_{\phi}(\lambda f+\mu g)\right](u,w)= \lambda V^{A}_{\phi}f(u,w)+\mu V^{A}_{\phi}g(u,w)
	\end{align}
for arbitrary constants $\lambda$ and $\mu$.
\end{pro}
\begin{proof} This follows directly from the linearity of the product and the integration
involved in Definition 2. \end{proof}
\begin{pro}[Shift]
 Let $\phi\in L^2(\mathbb{R})\backslash \{0\}$ be a window function and $f\in L^2(\mathbb{R})$. Then we have
\begin{align}
        \begin{split}
		V^{A}_{\phi}\{T_{t_{0}}f\}(u,w)&= V^{A}_{\phi}\{f\}(u-at_{0},w-t_{0})\\
&\times e^{iat_{0}w_{0}-i\frac{ac}{2}t_{0}^{2}+ict_{0}(u-u_{0})}\\
		\end{split}
	\end{align}
where $T_{t_{0}}f(t)=f(t-t_{0})$. \end{pro}
\begin{proof} By Definition 2, we have
\begin{align}
		V^{A}_{\phi}\{T_{t_{0}}f\}(u,w)&= \int_{\mathbb{R}}f(t-t_{0}) \overline{\phi(t-w)}K_{A}(t,u)\rm{d}\mit t
	\end{align}
By making the change of variable $x=t-t_{0}$ in the above expression, we obtain
\begin{align}
        \begin{split}
		V^{A}_{\phi}\{T_{t_{0}}f\}(u,w)&= \int_{\mathbb{R}}f(x) \overline{\phi(x-(w-t_{0}))}\\
&\times K_{A}(x+t_{0},u)\rm{d}\mit x\\
&=\int_{\mathbb{R}}f(x) \overline{\phi(x-(w-t_{0}))}Y\rm{d}\mit x\\
&=\int_{\mathbb{R}}f(x) \overline{\phi(x-(w-t_{0}))}\\
&\times K_{A}(x,u-at_{0})\rm{d}\mit x\\
&\times e^{iat_{0}w_{0}-i\frac{a}{2b}t_{0}^{2}(ad-1)+i\frac{1}{b}t_{0}(u-u_{0})(ad-1)}\\
&=V^{A}_{\phi}\{f\}(u-at_{0},w-t_{0})\\
&\times e^{iat_{0}w_{0}-i\frac{ac}{2}t_{0}^{2}+ict_{0}(u-u_{0})}\\
		\end{split}
	\end{align}
where $Y=\frac{1}{\sqrt{i2\pi b}} e^{i\frac{a}{2b}x^{2}+i\frac{a}{2b}t_{0}^{2}+i\frac{a}{b}xt_{0}-i\frac{1}{b}x(u-u_{0})}\\
\times e^{-i\frac{1}{b}t_{0}(u-u_{0})
-i\frac{1}{b}u(du_{0}-bw_{0})+i\frac{d}{2b}(u^{2}+u_{0}^{2})}$.

Which completes the proof.\end{proof}
\begin{pro}[Modulation]
 Let $\phi\in L^2(\mathbb{R})\backslash \{0\}$ be a window function and $f\in L^2(\mathbb{R})$. Then we have
\begin{align}
        \begin{split}
		V^{A}_{\phi}\{M_{s}f\}(u,w)&=V^{A}_{\phi}\{f\}(u-bs,w)\\
&\times e^{ibsw_{0}-i\frac{db}{2}s^{2}+ids(u-u_{0})}
\end{split}
	\end{align}
where $M_{s}f(t)=f(t)e^{ist}$. \end{pro}
\begin{proof}  From Definition 2, it follows that
\begin{align}
        \begin{split}
V^{A}_{\phi}\{M_{s}f\}(u,w)&=\int_{\mathbb{R}}f(t)e^{ist} \overline{\phi(t-w)}K_{A}(t,u)\rm{d}\mit t\\
&=\int_{\mathbb{R}}f(t)\overline{\phi(t-w)}Y_{1}\rm{d}\mit t\\
&=\int_{\mathbb{R}}f(t)\overline{\phi(t-w)}K_{A}(t,u-bs)\rm{d}\mit t\\
&\times e^{ibsw_{0}-i\frac{db}{2}s^{2}+ids(u-u_{0})}\\
&=V^{A}_{\phi}\{f\}(u-bs,w)\\
&\times e^{ibsw_{0}-i\frac{db}{2}s^{2}+ids(u-u_{0})}\\
\end{split}
	\end{align}
where $Y_{1}=\frac{1}{\sqrt{i2\pi b}} e^{i\frac{a}{2b}t^{2}-i\frac{1}{b}t((u-bs)-u_{0})-i\frac{1}{b}u(du_{0}-bw_{0})}\\
e^{i\frac{d}{2b}(u^{2}+u_{0}^{2})}$.

Which completes the proof.\end{proof}

\begin{pro}[Shift and modulation property]
 Let $\phi\in L^2(\mathbb{R})\backslash \{0\}$ be a window function and $f\in L^2(\mathbb{R})$. Then we have
\begin{align}
		EV^{A}_{\phi}\{T_{t_{0}}M_{s}f\}(u,w)
=V^{A}_{\phi}\{f\}(u-bs-at_{0},w-t_{0})
	\end{align}
where $E=e^{i\frac{1}{b}(bs+at_{0})(d(u_{0}-u)+\frac{b}{2}(ds+ct_{0})-bw_{0})}\\
\times e^{i\frac{t_{0}}{2b}(2u-bs-2u_{0})}$.
\end{pro}
\begin{proof}  The proof of Property 4 can be achieved by directly combining Properties 2 and Properties 3.
\end{proof}

We will prove the inversion formula for the WOLCT by the connection between the OLCT and the WOLCT. From this theorem, we know that it is possible to restore the original signal $f$ perfectly using the inverse WOLCT as follows.
\linespread{0.05}
\begin{pro}[Inversion formula]
 Let $\phi,\psi\in L^2(\mathbb{R})\backslash \{0\}$ be window function, $\langle\psi,\phi \rangle\neq0$ and $f\in L^2(\mathbb{R})$. Then we get
 \begin{align}
        \begin{split}
		f(t)&=\frac{1}{\langle\psi,\phi \rangle}e^{i\frac{cd}{2}u_{0}^{2}-iadu_{0}w_{0}+i\frac{ab}{2}w_{0}}\int_{\mathbb{R}^{2}}V^{A}_{\phi}f(u,w)\\
&\times K_{A^{-1}}(u,t)\psi(t-w)\rm{d}\mit u\rm{d}\mit w\\
\end{split}
	\end{align}
\end{pro}
\begin{proof}
Multiplying both sides of (9) from the right by $ \psi(t-w)$ and integrating with respect to $ \rm{d}w$ we get
\begin{align}
		\begin{split}
		\int_{\mathbb{R}}f(t)\overline{\phi(t-w)}\psi(t-w)\rm{d}\mit w&=
e^{i\frac{cd}{2}u_{0}^{2}-iadu_{0}w_{0}+i\frac{ab}{2}w_{0}}\\
&\times\int_{\mathbb{R}^{2}}V^{A}_{\phi}f(u,w)K_{A^{-1}}(u,t)\\
&\times\psi(t-w)\rm{d}\mit u\rm{d}\mit w
		\end{split}
	\end{align}
Using (2), we have
\begin{align}
        \begin{split}
		f(t)&=\frac{1}{\langle\psi,\phi \rangle}e^{i\frac{cd}{2}u_{0}^{2}-iadu_{0}w_{0}+i\frac{ab}{2}w_{0}}\int_{\mathbb{R}^{2}}V^{A}_{\phi}f(u,w)\\
&\times K_{A^{-1}}(u,t)\psi(t-w)\rm{d}u\rm{d}\mit w\\
\end{split}
	\end{align}
which completes the proof.\end{proof}
If $\phi=\psi$, then
\begin{align}
        \begin{split}
        f(t)&=\frac{1}{\|\phi\|^{2}}e^{i\frac{cd}{2}u_{0}^{2}-iadu_{0}w_{0}+
        i\frac{ab}{2}w_{0}}\int_{\mathbb{R}^{2}}V^{A}_{\phi}f(u,w)\\
        &\times K_{A^{-1}}(u,t)\phi(t-w)\rm{d}\mit u\rm{d}\mit w\\
\end{split}
	\end{align}
\begin{pro}[Orthogonality relation for WOLCT]
 Let $\phi,\psi\in L^2(\mathbb{R})\backslash \{0\}$ be window function and $f,g\in L^2(\mathbb{R})$. Then we get
 \begin{align}
		\langle V^{A}_{\phi}f(u,w), V^{A}_{\psi}g(u,w)\rangle=\langle f,g \rangle\langle\psi,\phi\rangle
	\end{align}
\end{pro}
\begin{proof}
From (6) and (8), we get
\begin{align}
		\begin{split}
		\langle V^{A}_{\phi}f(u,w), V^{A}_{\psi}g(u,w)\rangle&=\int_{\mathbb{R}}\int_{\mathbb{R}}V^{A}_{\phi}f(u,w)\\
&\times\overline{V^{A}_{\psi}g(u,w)}\rm{d}\mit u\rm{d}\mit w\\
&=\int_{\mathbb{R}}\int_{\mathbb{R}}O_{A}[f(t)\overline{\phi(t')}](u)\\
&\times\overline{O_{A}[g(t)\overline{\psi(t')}](u)}\rm{d}\mit u\rm{d}\mit w\\
&=\int_{\mathbb{R}}\int_{\mathbb{R}}f(t)\overline{\phi(t')}(\overline{g(t)}\psi(t'))\rm{d}\mit t\rm{d}\mit w\\
&=\int_{\mathbb{R}}f(t)\overline{g(t)}(\int_{\mathbb{R}}\psi(t')\overline{\phi(t')}\rm{d}\mit w)\rm{d}\mit t\\
&=\langle f,g \rangle\langle\psi,\phi\rangle
		\end{split}
	\end{align}
where $t'=t-w$
which completes the proof.\end{proof}
Based on the above theorem, we may conclude the following important consequences.

(i) If $\phi=\psi$, then
\begin{align}
		\langle V^{A}_{\phi}f(u,w), V^{A}_{\phi}g(u,w)\rangle=\langle f,g \rangle\|\phi\|^{2}
	\end{align}

(ii) If $f=g$, then
\begin{align}
		\langle V^{A}_{\phi}f(u,w), V^{A}_{\psi}f(u,w)\rangle=\|f\|^{2}\langle\psi,\phi\rangle
	\end{align}

(iii) If  $f=g$ and $\phi=\psi$, then
\begin{align}
		\begin{split}
		\langle V^{A}_{\phi}f(u,w), V^{A}_{\phi}f(u,w)\rangle=\|f\|^{2}\|\phi\|^{2}\\
=\int_{\mathbb{R}}\int_{\mathbb{R}}|V^{A}_{\phi}f(u,w)|^{2}\rm{d}\mit u\rm{d}\mit w
		\end{split}
	\end{align}
\begin{pro}[Parity]
 Let $\phi\in L^2(\mathbb{R})\backslash \{0\}$ be a window function and $f\in L^2(\mathbb{R})$. Then we have
\begin{align}
		V^{A}_{P\phi}\{Pf\}(u,w)=V^{A}_{\phi}\{f\}(2u_{0}-u,-w)e^{i2w_{0}(u-u_{0})}
	\end{align}
where $Pf(t)=f(-t)$. \end{pro}

\begin{proof}  From Definition 2, it follows that
\begin{align}
        \begin{split}
V^{A}_{P\phi}\{Pf\}(u,w)&=\int_{\mathbb{R}}f(-t)\overline{\phi(-(t-w))}K_{A}(t,u)\rm{d}\mit t\\
&=\int_{\mathbb{R}}f(-t)\overline{\phi(-t-(-w))}K'_{A}\rm{d}\mit t\\
&\times e^{i\frac{1}{b}(2u_{0}-u)(du_{0}-bw_{0})-i\frac{d}{2b}((2u_{0}-u)^{2}+u_{0}^{2})}\\
&\times e^{-i\frac{1}{b}u(du_{0}-bw_{0})+i\frac{d}{2b}(u^{2}+u_{0}^{2})}\\
&=V^{A}_{\phi}\{f\}(2u_{0}-u,-w)e^{i2w_{0}(u-u_{0})}\\
\end{split}
	\end{align}
where $K'_{A}=\frac{1}{\sqrt{i2\pi b}}e^{i\frac{a}{2b}(-t)^{2}-i\frac{1}{b}(-t)((2u_{0}-u)-u_{0})}\\
\times e^{-i\frac{1}{b}(2u_{0}-u)(du_{0}-bw_{0})+i\frac{d}{2b}((2u_{0}-u)^{2}+u_{0}^{2})}$.

Which completes the proof.\end{proof}
\begin{pro}
 Let $\phi\in L^2(\mathbb{R})\backslash \{0\}$ be a window function and $f\in L^2(\mathbb{R})$. Then we have
\begin{align}
        \begin{split}
		V^{A}_{\overline{\phi}}\{\overline{f}\}(u,w)&=V^{A}_{f}\{\phi\}(u-aw,-w)\\
&\times e^{icw(u_{0}-u)+iaww_{0}-i\frac{ac}{2}w^{2}}
\end{split}
	\end{align} \end{pro}
\begin{proof}  From Definition 2, let $t-w=t_{1}$, then
\begin{align}
        \begin{split}
V^{A}_{\overline{\phi}}\{\overline{f}\}(u,w)&=\int_{\mathbb{R}}\overline{f(t)}\phi(t-w)K_{A}(t,u)\rm{d}\mit t\\
&=\int_{\mathbb{R}}\phi(t_{1})\overline{f(t_{1}-(-w))}\frac{1}{\sqrt{i2\pi b}}\\
&\times e^{i\frac{a}{2b}(t_{1}+w)^{2}-i\frac{1}{b}(t_{1}+w)((2u_{0}-u)-u_{0})}\\
&\times e^{-i\frac{1}{b}u(du_{0}-bw_{0})+i\frac{d}{2b}(u^{2}+u_{0}^{2})}\rm{d}\mit t_{1}\\
&= \int_{\mathbb{R}}\phi(t_{1})\overline{f(t_{1}-(-w))}\frac{1}{\sqrt{i2\pi b}}\\
&\times e^{i\frac{a}{2b}t_{1}^{2}-i\frac{1}{b}t_{1}((u-aw)-u_{0})}\\
&\times e^{-i\frac{1}{b}(u-aw)(du_{0}-bw_{0})+i\frac{d}{2b}((u-aw)^{2}+u_{0}^{2})}\rm{d}\mit t_{1}\\
&\times e^{i\frac{1}{b}(u-aw)(du_{0}-bw_{0})-i\frac{d}{2b}((u-aw)^{2}+u_{0}^{2})}\\
&\times e^{i\frac{a}{2b}w^{2}-i\frac{1}{b}w(u-u_{0})}\\
&\times e^{-i\frac{1}{b}u(du_{0}
-bw_{0})+i\frac{d}{2b}(u^{2}+u_{0}^{2})}\\
&=V^{A}_{f}\{\phi\}(u-aw,-w)\\
&\times e^{icw(u_{0}-u)+iaww_{0}-i\frac{ac}{2}w^{2}}\\
\end{split}
	\end{align}
which completes the proof.\end{proof}
\section{Convolution and Correlation Theorems for WOLCT}

In this section, we derive convolution and correlation theorems for WOLCT.

\begin{defn} [OLCT Convolution]
For any $f,g\in L^2(\mathbb{R})$, we define the convolution operation $\star$ for OLCT by
\begin{align}
	(f\star g)(t)=\int_{\mathbb{R}}f(x)g(t-x)e^{-i\frac{a}{2b}x(t-x)}\rm{d}\mit x
	\end{align}	
\end{defn}
As a consequence of the above definition, we get the following important theorem:
\begin{thm} [WOLCT Convolution]
Let $\phi\in L^{2}(\mathbb{R})\backslash \{0\}$. Then, for every $f,g\in L^{2}(\mathbb{R})$,
we have
\begin{align}
		\begin{split}
	V_{\phi\star\psi}^{A}(f\star g)(u,w)&=B
\int_{\mathbb{R}}V_{\phi}^{A}f(m_{0},m)V_{\psi}^{A}g(m_{1},w-m)\\
&\times e^{i\frac{a}{2b}m(\frac{da}{2}-1)(m-w)}\rm{d}\mit m\\
		\end{split}
	\end{align}
where $m_{0}=u-\frac{a}{2}(w-m)$, $m_{1}=u-\frac{a}{2}m$, $B=\sqrt{i2\pi b}e^{i\frac{1}{b}(u-\frac{a}{2}w)(du_{0}-bw_{0})}e^{-i\frac{da}{2b}w(\frac{a}{4}w-u)-i\frac{a}{2b}(u^{2}+u_{0}^{2})}$.
\end{thm}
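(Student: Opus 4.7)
The plan is to expand both sides as iterated integrals in auxiliary variables, recognise pieces of the integrand as $V^A_\phi f$ and $V^A_\psi g$, and match the remaining phase factor with $B\cdot e^{i\frac{a}{2b}m(\frac{da}{2}-1)(m-w)}$ by Fubini.

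First, by Definition 2 I would write
\[
V^A_{\phi\star\psi}(f\star g)(u,w) = \int_{\mathbb{R}}(f\star g)(t)\,\overline{(\phi\star\psi)(t-w)}\,K_A(t,u)\,\mathrm{d}t,
\]
insert Definition 3 into both convolutions, and change variables $(t,x,y)\to(x,s,m)$ via $s=t-x$ and $m=x-y$ (with unit jacobian). This makes the window factors read $\overline{\phi(x-m)}$ and $\overline{\psi(s-(w-m))}$, which are exactly the shapes required to rebuild $V^A_\phi f(\,\cdot\,,m)$ and $V^A_\psi g(\,\cdot\,,w-m)$; the kernel becomes $K_A(x+s,u)$, and the two convolution chirps from Definition 3 combine into a single exponential in $x,s,m,w$.

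Next I would split the $(x,s)$-dependent part of the combined exponential as $K_A(x,m_0)\,K_A(s,m_1)$ times an $(x,s)$-free correction. This is done by expanding $\frac{a}{2b}(x+s)^2$ and matching the linear-in-$x$ and linear-in-$s$ phases, which forces the choices $m_0 = u-\frac{a}{2}(w-m)$ and $m_1 = u-\frac{a}{2}m$. The constraint $ad-bc=1$ is invoked here to turn coefficients such as $a(ad-1)/(2b)$ into $ac/2$, and the two factors $1/\sqrt{i2\pi b}$ coming from $K_A(x,m_0)\,K_A(s,m_1)$ combine with the $\sqrt{i2\pi b}$ inside $B$ to reproduce the single $1/\sqrt{i2\pi b}$ present on the left.

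Once the kernel factorisation is in hand, Fubini immediately identifies the $x$- and $s$-integrals as $V^A_\phi f(m_0,m)$ and $V^A_\psi g(m_1,w-m)$. The main obstacle, and the heart of the proof, is the bookkeeping of all $(x,s)$-independent phases: the $u$-only contributions $-\frac{1}{b}u(du_0-bw_0)+\frac{d}{2b}(u^2+u_0^2)$ from the original kernel, their $m_0,m_1$-counterparts from the factorised kernels, and the residual $\frac{a}{2b}m(w-m)$ produced in the change of variables, all have to be collected, rearranged, and shown to collapse exactly into $B$ together with the $m$-dependent chirp $e^{i\frac{a}{2b}m(\frac{da}{2}-1)(m-w)}$, with no leftover dependence on $x$ or $s$.
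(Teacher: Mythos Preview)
Your approach coincides with the paper's: the same substitution (the paper sets $x=x_{1}$, $t=x_{1}+x_{2}$, $r=x_{1}-m$, which is exactly your $s=t-x$, $m=x-y$ with $y=r$), the same factorisation of the combined phase into $K_{A}(x,m_{0})\,K_{A}(s,m_{1})$ forcing $m_{0}=u-\tfrac{a}{2}(w-m)$ and $m_{1}=u-\tfrac{a}{2}m$, and the same collection of the $(x,s)$-free remainder into $B$ times the $m$-chirp. One point to watch in your bookkeeping: the paper expands the window factor as $(\overline{\phi}\star\overline{\psi})(t-w)$ rather than $\overline{(\phi\star\psi)(t-w)}$, so the second convolution chirp enters with the sign $e^{-i\frac{a}{2b}r(t-w-r)}$; if you literally conjugate the $\star$-product as you wrote, that sign flips and your phase tally will not reproduce the stated $B$ unless you adopt the paper's convention.
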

\begin{proof}
Based on Definition 2 and Definition 3, we get
\begin{align}
		\begin{split}
	V_{\phi\star\psi}^{A}(f\star g)(u,w)&=\int_{\mathbb{R}}(f\star g)(t)(\overline{\phi}\star\overline{\psi})(t-w)K_{A}(t,u)\rm{d}\mit t\\
&=\int_{\mathbb{R}}\int_{\mathbb{R}}f(x)g(t-x)e^{-i\frac{a}{2b}x(t-x)}\rm{d}\mit x\\
&\times\int_{\mathbb{R}}\overline{\phi}(r)\overline{\psi}(t-w-r)e^{-i\frac{a}{2b}r(t-w-r)}\rm{d}\mit r\\
&\times K_{A}(t,u)\rm{d}\mit t\\
		\end{split}
	\end{align}
Setting $x=x_{1}, t=x_{1}+x_{2}, r=x_{1}-m$, \\
$y_{0}=i\frac{1}{b}u(du_{0}-bw_{0})$, $y_{1}=i\frac{d}{2b}(u^{2}+u_{0}^{2})$, we get
\begin{align}
		\begin{split}
	V_{\phi\star\psi}^{A}(f\star g)(u,w)&=\int_{\mathbb{R}^{3}}f(x_{1})g(x_{2})\overline{\phi}(x_{1}-m)\\
&\times\overline{\psi}(x_{2}-(w-m))
e^{-i\frac{a}{b}x_{1}x_{2}}\\
&\times e^{+i\frac{a}{2b}x_{1}(w-m)+i\frac{a}{2b}mx_{2}-i\frac{a}{2b}m(w-m)}\\
&\times \frac{1}{\sqrt{i2\pi b}}e^{i\frac{a}{2b}(x_{1}+x_{2})^{2}-i\frac{1}{b}(x_{1}+x_{2})(u-u_{0})}\\
&\times e^{-y_{0}+y_{1}}\rm{d}\mit x_{1}\rm{d}\mit x_{2}\rm{d}\mit m\\
&=\int_{\mathbb{R}^{3}}f(x_{1})g(x_{2})\overline{\phi}(x_{1}-m)\\
&\times\overline{\psi}(x_{2}-(w-m))\frac{1}{\sqrt{i2\pi b}}\\
&\times e^{i\frac{a}{2b}x_{1}^{2}-i\frac{1}{b}x_{1}(u-\frac{a}{2}(w-m)-u_{0})+i\frac{a}{2b}x_{2}^{2}}\\
&\times e^{-i\frac{1}{b}x_{2}(u-\frac{am}{2}-u_{0})-i\frac{a}{2b}m(w-m)}\\
&\times e^{-y_{0}+y_{1}}\rm{d}\mit x_{1}\rm{d}\mit x_{2}\rm{d}\mit m\\
&=B\int_{\mathbb{R}}\int_{\mathbb{R}}f(x_{1})\overline{\phi}(x_{1}-m)\\
&\times K_{A}(x_{1},m_{0})\rm{d}\mit x_{1}\\
&\times\int_{\mathbb{R}}g(x_{2})\overline{\psi}(x_{2}-(w-m))\\
&\times K_{A}(x_{2},m_{1})\rm{d}\mit x_{2} e^{i\frac{a}{2b}m(\frac{da}{2}-1)(m-w)}\rm{d}\mit m\\
&=B\int_{\mathbb{R}}V_{\phi}^{A}f(m_{0},m)V_{\psi}^{A}g(m_{1},w-m)\\
&\times e^{i\frac{a}{2b}m(\frac{da}{2}-1)(m-w)}\rm{d}\mit m \\
		\end{split}
	\end{align}
where $K_{A}(x_{1},m_{0})=\frac{1}{\sqrt{i2\pi b}}e^{i\frac{a}{2b}x_{1}^{2}-i\frac{1}{b}x_{1}(u-\frac{a}{2}(w-m)-u_{0})}\times\\
e^{-i\frac{1}{b}(u-\frac{a}{2}(w-m))(du_{0}-bw_{0})+i\frac{d}{2b}((u-\frac{a}{2}(w-m))^{2}+u_{0}^{2})}$,\\
$K_{A}(x_{2},m_{1})=\frac{1}{\sqrt{i2\pi b}}e^{i\frac{a}{2b}x_{2}^{2}-i\frac{1}{b}x_{2}(u-\frac{a}{2}m-u_{0})}\times\\
e^{-i\frac{1}{b}(u-\frac{a}{2}m)(du_{0}-bw_{0})+i\frac{d}{2b}((u-\frac{a}{2}m)^{2}+u_{0}^{2})}$.  \end{proof}

\setlength{\lineskip}{0.5em}
\begin{corollary}
If the parameter of WOLCT changes to $(a,b,c,d,u_{0},w_{0})=(a,b,c,d,0,0)$, then the Theorem 1 reduces to convolution theorem as follows:
\begin{align}
		\begin{split}
	V_{\phi\star\psi}^{A}(f\star g)(u,w)&=\sqrt{i2\pi b}e^{-i\frac{da}{2b}w(\frac{a}{4}w-u)-i\frac{a}{2b}u^{2}}\\
&\times\int_{\mathbb{R}}G_{\phi}^{A}f(m_{0},m)G_{\psi}^{A}g(m_{1},w-m)\\
&\times e^{i\frac{a}{2b}m(\frac{da}{2}-1)(m-w)}\rm{d}\mit m\\
		\end{split}
	\end{align}
where $G_{\phi}^{A}f$ and $G_{\psi}^{A}g$ are the window functions in the LCT domain of $f$ and $g$ \cite{b14,b15}, respectively.
\end{corollary}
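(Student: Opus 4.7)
The plan is to deduce the corollary directly from Theorem~1 by substituting the special parameter values $u_0 = 0$ and $w_0 = 0$ and tracking how each factor simplifies. Since this is a specialization rather than an independent result, no new integral manipulation will be required, and the main task is bookkeeping of the phase factors.

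First, I would recall the observation made right after Definition~1: when $u_0 = 0$ and $w_0 = 0$, the OLCT kernel $K_A$ collapses to the ordinary LCT kernel, and correspondingly the WOLCT $V_\phi^A$ collapses to the WLCT $G_\phi^A$ as defined in \cite{b14,b15}. Applying this observation to the right-hand side of the formula in Theorem~1, both occurrences $V_\phi^A f(m_0,m)$ and $V_\psi^A g(m_1, w-m)$ on the right-hand side are replaced by $G_\phi^A f(m_0,m)$ and $G_\psi^A g(m_1, w-m)$ respectively, while the abscissas $m_0 = u - \tfrac{a}{2}(w-m)$ and $m_1 = u - \tfrac{a}{2}m$ and the inner phase factor $e^{i\frac{a}{2b}m(\frac{da}{2}-1)(m-w)}$ are unchanged because they do not involve $u_0$ or $w_0$.

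Next, I would simplify the prefactor $B$ from Theorem~1. Setting $u_0 = 0$ and $w_0 = 0$ kills the entire middle exponential $e^{i\frac{1}{b}(u-\frac{a}{2}w)(du_0 - bw_0)}$, and also the $u_0^2$ contribution in the last exponential. Hence
\begin{equation*}
B \longrightarrow \sqrt{i2\pi b}\, e^{-i\frac{da}{2b}w(\frac{a}{4}w - u) - i\frac{a}{2b}u^2},
\end{equation*}
which matches the prefactor displayed in the corollary. Combining this reduced prefactor with the rewritten integrand yields the stated identity.

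There is no real obstacle here: the proof is an application of Theorem~1 together with the known fact that the OLCT degenerates to the LCT (and hence the WOLCT to the WLCT) when the shift and modulation parameters $u_0, w_0$ vanish. The only thing to be careful about is to verify that every $u_0$ and $w_0$ hidden inside the abbreviations $y_0$, $y_1$, $K_A(x_1,m_0)$, and $K_A(x_2,m_1)$ used in the proof of Theorem~1 has been accounted for; once this check is done, the expression collapses cleanly to the form claimed, and the proof is complete.
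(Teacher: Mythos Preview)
Your proposal is correct and follows precisely the approach the paper intends: the paper states Corollary~1 without proof immediately after Theorem~1, so it is meant as a direct specialization obtained by setting $u_0=w_0=0$, which collapses the WOLCT to the WLCT $G_\phi^A$ and reduces the prefactor $B$ exactly as you describe. There is nothing to add.
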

\begin{corollary}
If the parameter of WOLCT changes to $(a,b,c,d,u_{0},w_{0})=(0,1,-1,0,0,0)$, then the Theorem 1 reduces to convolution theorem as follows:
\begin{align}
		\begin{split}
	V_{\phi\star\psi}^{A}(f\star g)(u,w)&=\sqrt{i2\pi b}e^{-iuw_{0}}
\int_{\mathbb{R}}V_{\phi}^{A}f(u,m)\\
&\times V_{\psi}^{A}g(u,w-m)\rm{d}\mit m
		\end{split}
	\end{align}
\end{corollary}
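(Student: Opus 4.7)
The plan is to derive Corollary 2 as a direct specialization of Theorem 1 under the parameter choice $(a,b,c,d,u_{0},w_{0})=(0,1,-1,0,0,0)$. Since Theorem 1 already provides the convolution identity in full generality, the task reduces to evaluating each ingredient $m_{0}$, $m_{1}$, the inner exponential $e^{i\frac{a}{2b}m(\frac{da}{2}-1)(m-w)}$, and the prefactor $B$ under the vanishing parameters $a=d=u_{0}=w_{0}=0$ together with $b=1$. No new analysis is required; the work is purely substitutional.

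First I would compute the two shifted first-arguments of the WOLCTs. With $a=0$, both $m_{0}=u-\frac{a}{2}(w-m)$ and $m_{1}=u-\frac{a}{2}m$ collapse to $u$, which explains why the stated corollary contains $V_{\phi}^{A}f(u,m)$ and $V_{\psi}^{A}g(u,w-m)$ in the integrand. Next I would handle the two exponential factors. The inner chirp $e^{i\frac{a}{2b}m(\frac{da}{2}-1)(m-w)}$ is unity since it carries an overall $a=0$, so it drops out of the integrand entirely.

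For the prefactor $B=\sqrt{i2\pi b}\,e^{i\frac{1}{b}(u-\frac{a}{2}w)(du_{0}-bw_{0})}\,e^{-i\frac{da}{2b}w(\frac{a}{4}w-u)-i\frac{a}{2b}(u^{2}+u_{0}^{2})}$, the second and third exponentials are unity since each term in their exponents carries a factor of $a$ or $da$, while the first exponential reduces to $e^{-iuw_{0}}$ once $a=0$, $u_{0}=0$, and $b=1$ are imposed. The radical evaluates to $\sqrt{i2\pi b}=\sqrt{i2\pi}$. Assembling these pieces in the formula of Theorem 1 reproduces the identity in Corollary 2 exactly.

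The main obstacle is purely bookkeeping: ensuring every factor in $B$ that depends implicitly on $a$, $d$, $u_{0}$, or $w_{0}$ is tracked to its vanishing limit, and noticing that the surviving $\sqrt{i2\pi b}\,e^{-iuw_{0}}$ prefactor is retained in the corollary's stated form even though both factors are numerically trivial under the chosen parameter tuple. One small sanity check I would perform at the end is $ad-bc = 0\cdot 0 - 1\cdot(-1) = 1$, confirming the symplectic constraint needed for Theorem 1 to be applicable in the first place.
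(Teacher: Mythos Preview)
Your proposal is correct and follows exactly the approach the paper intends: Corollary~2 is stated without proof as an immediate specialization of Theorem~1, and your substitution of $(a,b,c,d,u_{0},w_{0})=(0,1,-1,0,0,0)$ into $m_{0}$, $m_{1}$, the inner chirp, and $B$ reproduces the claimed formula. Your remark that the surviving prefactor $\sqrt{i2\pi b}\,e^{-iuw_{0}}$ is kept symbolically even though it is numerically trivial under the given parameters, together with the verification of $ad-bc=1$, covers all the bookkeeping.
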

\begin{defn} [OLCT Correlation]
For any $f,g\in L^2(\mathbb{R})$, we define the correlation operation $\circ$ for OLCT by
\begin{align}
	(f\circ g)(t)=\int_{\mathbb{R}}\overline{f(x)}g(x+t)e^{i\frac{a}{2b}x(x+t)}\rm{d}\mit x
	\end{align}	
\end{defn}
Next, we establish the correlation theorem for the WOLCT.
\begin{thm} [WOLCT Correlation]
Let $\phi\in L^{2}(\mathbb{R})\backslash \{0\}$. Then, for every $f,g\in L^{2}(\mathbb{R})$,
we have
\begin{align}
		\begin{split}
	V_{\phi\circ\psi}^{A}(f\circ g)(u,w)&=B_{0}
\int_{\mathbb{R}}V_{P\overline{\phi}}^{A}\left\{P\overline{f}\right\}(m_{2},-m)\\
&\times V_{\psi}^{A}g(m_{3},m_{4})
e^{-i\frac{a}{2b}m(\frac{da}{2}-1)m_{4}}\rm{d}\mit m\\
		\end{split}
	\end{align}
where $P\overline{f}(x)=\overline{f(-x)}$, $m_{2}=u-\frac{a}{2}(w+m)$, $m_{3}=u+\frac{a}{2}m$, $B_{0}=\sqrt{i2\pi b}e^{i\frac{1}{b}(u-\frac{a}{2}w)(du_{0}-bw_{0})-i\frac{da}{2b}w(\frac{a}{4}w-u)-i\frac{a}{2b}(u^{2}+u^{2})}$\\
$m_{4}=w+m$
\end{thm}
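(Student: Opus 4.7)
The plan is to parallel the proof of Theorem 1 given above, exploiting the close structural similarity between the OLCT correlation of Definition 4 and the OLCT convolution of Definition 3. I begin by applying Definition 2 to the left-hand side and expanding both $(f\circ g)(t)$ and the window factor via Definition 4, writing the latter in the $\overline{\phi}\circ\overline{\psi}$ shorthand that the paper uses in the convolution case. This reduces the left-hand side to a triple integral over $t$, the inner variable $x$ of the signal correlation, and the inner variable $r$ of the window correlation.

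The next step is to introduce a change of variables designed to recover the desired factorisation. Setting $x_{1}=x$, $x_{2}=x+t$, and $m=x_{1}-r$, the window argument $r+t-w$ rewrites as $x_{2}-(w+m)$, matching the expected second argument $m_{4}=w+m$, and the signal produces $\overline{f(x_{1})}\,g(x_{2})\,e^{i\frac{a}{2b}x_{1}x_{2}}$. The key calculation is to verify that the cross term in $x_{1}x_{2}$ cancels: the signal correlation contributes $+\frac{a}{2b}x_{1}x_{2}$, the window correlation also contributes $+\frac{a}{2b}x_{1}x_{2}$, while $K_{A}(t,u)=K_{A}(x_{2}-x_{1},u)$ contributes $-\frac{a}{b}x_{1}x_{2}$ from the $(x_{2}-x_{1})^{2}$ piece, summing to zero. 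The remaining quadratic and linear terms in $x_{1},x_{2}$ then assemble into $K_{A}(-x_{1},m_{2})\,K_{A}(x_{2},m_{3})$ times an $m$-dependent residual phase, where $m_{2}=u-\frac{a}{2}(w+m)$ and $m_{3}=u+\frac{a}{2}m$.

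With the integrand factored, each inner integral is identifiable as a WOLCT. For the $x_{1}$ integral, the kernel $K_{A}(-x_{1},m_{2})$ is precisely what appears when one expands $V^{A}_{P\overline{\phi}}\{P\overline{f}\}(m_{2},-m)$ by Definition 2 and substitutes $t\to -x_{1}$: the factors $\overline{f(-t)}$ and $\overline{P\overline{\phi}(t+m)}=\phi(-(t+m))$ become $\overline{f(x_{1})}$ and $\phi(x_{1}-m)$, matching the integrand above. The $x_{2}$ integral is directly $V^{A}_{\psi}g(m_{3},w+m)=V^{A}_{\psi}g(m_{3},m_{4})$. Finally, the leftover $\sqrt{i2\pi b}$ factor (one $K_{A}$ outside the integrals against two inside) together with the $m$-independent phases collects into $B_{0}$.

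The main obstacle I expect is the exponential bookkeeping: tracking the many chirp factors through the substitution and verifying that the $m$-dependent residual phase simplifies to $e^{-i\frac{a}{2b}m(\frac{da}{2}-1)m_{4}}$. This comes from combining the $\frac{a}{2b}m(w+m)$ piece left over from the window correlation with the $\frac{d}{2b}(m_{2}^{2}+m_{3}^{2}-u^{2})$-type contribution produced when the single $K_{A}(t,u)$ is replaced by the two new kernels $K_{A}(-x_{1},m_{2})$ and $K_{A}(x_{2},m_{3})$. The parity twist—why $P\overline{f},P\overline{\phi}$ appear on the right-hand side and why the first argument there is $-m$ rather than $m$—is a direct consequence of the correlation chirp $e^{i\frac{a}{2b}x(x+t)}$ carrying the opposite sign from the convolution chirp $e^{-i\frac{a}{2b}x(t-x)}$; this forces the kernel $K_{A}(-x_{1},m_{2})$ rather than $K_{A}(x_{1},m_{2})$ in the factored form, which matches the $P\overline{f}$ definition. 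None of these steps is conceptually deep, but there is significant opportunity for sign errors and they must be performed with care.
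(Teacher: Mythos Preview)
Your proposal is correct and follows essentially the same route as the paper: expand via Definitions~2 and~4 to a triple integral, apply the substitution $x_{1}=x$, $x_{2}=x+t$, $m=x_{1}-r$ (equivalently the paper's $t=x_{2}-x_{1}$, $r=x_{1}-m$), verify the $x_{1}x_{2}$ cross-term cancellation, and factor the result into the two WOLCT integrals with the residual phase and $B_{0}$. Your explicit use of $K_{A}(-x_{1},m_{2})$ is in fact cleaner than the paper's slightly abusive notation $K_{A}(x_{1},m_{2})$, which it immediately redefines with $-x_{1}$ inside; the content is identical.
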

\begin{proof}
Based on Definition 3 and Definition 4, we get
\begin{align}
		\begin{split}
	V_{\phi\circ\psi}^{A}(f\circ g)(u,w)&=\int_{\mathbb{R}}(f\circ g)(t)(\overline{\phi}\circ\overline{\psi})(t-w)K_{A}(t,u)\rm{d}\mit t\\
&=\int_{\mathbb{R}}\int_{\mathbb{R}}\overline{f(x)}g(x+t)e^{i\frac{a}{2b}x(t+x)}\rm{d}\mit x\\
&\times\int_{\mathbb{R}}\phi(r)\overline{\psi}(r+t-w)e^{i\frac{a}{2b}r(r+t-w)}\rm{d}\mit r\\
&\times K_{A}(t,u)\rm{d}\mit t\\
		\end{split}
	\end{align}
Setting $x=x_{1}, t=x_{2}-x_{1}, r=x_{1}-m$, we get
\begin{align}
		\begin{split}
	V_{\phi\circ\psi}^{A}(f\circ g)(u,w)&=\int_{\mathbb{R}^{3}}f(x_{1})g(x_{2})\phi(x_{1}-m)\\
&\times\overline{\psi}(x_{2}-m_{4})\\
&\times e^{i\frac{a}{b}x_{1}x_{2}-i\frac{a}{2b}x_{1}m_{4}-i\frac{a}{2b}mx_{2}+i\frac{a}{2b}mm_{4}}\\
&\times K_{A}(x_{2}-x_{1},u)\rm{d}\mit x_{1}\rm{d}\mit x_{2}\rm{d}\mit m\\
&=B_{0}\int_{\mathbb{R}}\int_{\mathbb{R}}\overline{f(x_{1})}\phi(x_{1}-m)\\
&\times K_{A}(x_{1},m_{2})\rm{d}\mit x_{1}\\
&\times\int_{\mathbb{R}}g(x_{2})\overline{\psi}(x_{2}-m_{4})K_{A}(x_{2},m_{3})\rm{d}\mit x_{2}\\
&\times e^{-i\frac{a}{2b}m(\frac{da}{2}-1)(m+w)}\rm{d}\mit m\\
&=B_{0}\int_{\mathbb{R}}V_{P\overline{\phi}}^{A}\left\{P\overline{f}\right\}(m_{2},-m)\\
&\times V_{\psi}^{A}g(m_{3},m_{4})
e^{-i\frac{a}{2b}m(\frac{da}{2}-1)m_{4}}\rm{d}\mit m\\
		\end{split}
	\end{align}
where $K_{A}(x_{1},m_{2})=\frac{1}{\sqrt{i2\pi b}}e^{i\frac{a}{2b}(-x_{1})^{2}-i\frac{1}{b}(-x_{1})}\\
\times e^{u-\frac{a}{2}(w+m)-u_{0}}\\
\times e^{-i\frac{1}{b}(u-\frac{a}{2}(w+m))(du_{0}-bw_{0})
+i\frac{d}{2b}((u-\frac{a}{2}(w+m))^{2}+u_{0}^{2})}$,\\
$K_{A}(x_{2},m_{3})=\frac{1}{\sqrt{i2\pi b}}e^{i\frac{a}{2b}x_{2}^{2}-i\frac{1}{b}x_{2}(u+\frac{a}{2}m-u_{0})}\\
\times e^{-i\frac{1}{b}(u+\frac{a}{2}m)(du_{0}-bw_{0})+i\frac{d}{2b}((u+\frac{a}{2}m)^{2}+u_{0}^{2})}$.  \end{proof}
\begin{corollary}
If the parameter of WOLCT changes to $(a,b,c,d,u_{0},w_{0})=(0,1,-1,0,0,0)$, then the Theorem 2 reduces to correlation theorem as follows:
\begin{align}
		\begin{split}
	V_{\phi\circ\psi}^{A}(f\circ g)(u,w)&=\sqrt{i2\pi b}e^{-iuw_{0}}
\int_{\mathbb{R}}V_{P\overline{\phi}}^{A}\left\{P\overline{f}\right\}(u,-m)\\
&\times V_{\psi}^{A}g(u,w+m)\rm{d}\mit m
		\end{split}
	\end{align}
\end{corollary}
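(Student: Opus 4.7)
The plan is to obtain the stated formula as a direct specialization of Theorem 2 by substituting the parameter values $(a,b,c,d,u_{0},w_{0})=(0,1,-1,0,0,0)$ into every piece of the general correlation identity and simplifying.

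First I would specialize the shift indices that appear in Theorem 2. With $a=0$, the auxiliary quantities collapse: $m_{2}=u-\tfrac{a}{2}(w+m)=u$, $m_{3}=u+\tfrac{a}{2}m=u$, while $m_{4}=w+m$ is unchanged. So both WOLCT factors in the integrand acquire the common first argument $u$, matching the right-hand side of Corollary 4.

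Next I would simplify the chirp factor $e^{-i\frac{a}{2b}m(\frac{da}{2}-1)m_{4}}$ appearing under the integral. Because $a=0$, the entire exponent vanishes and this factor reduces to $1$, eliminating it from the integrand. I would then simplify the prefactor $B_{0}=\sqrt{i2\pi b}\,e^{i\frac{1}{b}(u-\frac{a}{2}w)(du_{0}-bw_{0})-i\frac{da}{2b}w(\frac{a}{4}w-u)-i\frac{a}{2b}(u^{2}+u_{0}^{2})}$ term by term: with $a=0$ and $d=0$, the second and third exponents in $B_{0}$ vanish, while the first reduces to $i\cdot u\cdot(-w_{0})=-iuw_{0}$; together with $b=1$ this leaves $B_{0}=\sqrt{i2\pi b}\,e^{-iuw_{0}}$, which is exactly the constant stated in the corollary (still written with the parameter names $b$ and $w_{0}$ for uniformity with Corollary 3, even though numerically $b=1$ and $w_{0}=0$).

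Assembling the three observations into the formula of Theorem 2 yields
\begin{align*}
V_{\phi\circ\psi}^{A}(f\circ g)(u,w)
&=\sqrt{i2\pi b}\,e^{-iuw_{0}}\int_{\mathbb{R}}V_{P\overline{\phi}}^{A}\{P\overline{f}\}(u,-m)\,V_{\psi}^{A}g(u,w+m)\,\mathrm{d}m,
\end{align*}
which is the claimed identity. There is essentially no obstacle here; the only thing to watch is the bookkeeping of which terms in $B_{0}$ survive once $a=d=0$ is imposed, and the recognition that the shift variables $m_{2}, m_{3}$ both degenerate to $u$ so that the two WOLCT factors depend on the same spatial variable.
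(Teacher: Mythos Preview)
Your proposal is correct and follows exactly the approach the paper intends: the corollary is obtained from Theorem 2 by direct substitution of $(a,b,c,d,u_{0},w_{0})=(0,1,-1,0,0,0)$, and your bookkeeping for $m_{2},m_{3},m_{4}$, the chirp factor, and $B_{0}$ is accurate. The paper gives no separate proof for this corollary, so there is nothing further to compare.
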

\section{Conclusion}
In this paper, based on the association between the window function and the OLCT, we have studied the WOLCT. We first study some properties of the WOLCT, such as shift, modulation and orthogonality relation. Then, the convolution and correlation theorems for the WOLCT are shown. In our future works, the applications of the convolution and correlation theorems in signal and image processing will be investigated, and sampling theorem for the WOLCT and its applications will be studied.

\vspace{12pt}


\begin{thebibliography}{00}
\bibitem{b1} D. Urynbassarova, B. Z. Li, and R. Tao, The Wigner-Ville distribution in the linear canonical transform domain, IAENG International Journal of Applied Mathematics. 46 (no. 4(December)) (2016) 559-563.
\bibitem{b2} S. Abe, J. T. Sheridan, Optical operations on wave functions as the Abelian Subgroups of the special affine Fourier transformation, Opt. Lett 19 (no. 22)(1994) 1801-1803.
\bibitem{b3} N. Goel, and K. Singh, A modified convolution and product theorem for the linear canonical transform derived by representation transformation in quantum mechanics. International Journal of Applied Mathematics and Computer Science, 2013, 23(3): 685-695.
\bibitem{b4} X. Zhi, D. Wei, and W. Zhang, A generalized convolution theorem for the special affine Fourier transform and its application to filtering, Optik 127 (5) (2016) 2613-2616.
\bibitem{b5} A. Stern, Sampling of compact signals in offset linear canonical transform domains, Signal Image Video Process 1 (4) (2007) 359-367.
\bibitem{b6} Q. Feng, and B. Z. Li, Convolution and correlation theorems for the two-dimensional linear canonical transform and its applications. IET Signal Processing, 2016, 10(2): 125-132.
\bibitem{b7} T. Z. Xu, and B. Z. Li, Linear Canonical Transform and Its Application, Beijing, Science Press, 2013.
\bibitem{b8} M. Bahri, and R. Ashino, Convolution and correlation theorems for Wigner-Ville distribution associated with linear canonical transform, 12th International Conference on Information Technology. (2015).
\bibitem{b9} R. Tao, B. Z. Li, Y. Wang, and G. K. Aggrey, On sampling of band-limited signals associated with the linear canonical transform, IEEE Transactions on Signal Process 56 (11) (2008) 5454-5464.
\bibitem{b10} R. F. Bai, B. Z. Li, and Q. Y. Cheng, Wigner-Ville distribution associated with the linear canonical transform, Journal of Applied Mathematics. 2012 (2012) 1-14.
\bibitem{b11} L. Xiao, and W. Sun, Sampling theorems for signals periodic in the linear canonical transform domain, Optics Communications 290 (2013) 14-18.
\bibitem{b12} D. Urynbassarova, A. Urynbassarova, and E. Al-Hussam, The Wigner-Ville distribution based on the offset linear canonical transform domain, 2nd International Conference on Modelling, Simulation and Applied Mathematics, (March 2017) (2017).
\bibitem{b13} J. Shi, X. Liu, and N. Zhang. Generalized convolution and product theorems associated with linear canonical transform. Signal, Image and Video Processing, 2014, 8(5): 967-974.
\bibitem{b14} K. I. Kou and R. H. Xu, Windowed linear canonical transform and its applications, Signal Processing, vol. 92, no. 1, pp. 179188, 2012.
\bibitem{b15} K. I. Kou, R. H. Xu and Y. H. Zhang, Paley-Wiener theorems and uncertainty principles for the windowed linear canonical transform, Mathenatical Methods in the  Applied Science. 35 (2012) 2122-2132.
\bibitem{b16} Q. Xiang, and K. Y. Qin, Convolution, correlation, and sampling theorems for the offset linear canonical transform, Signal Image Video Process. 2014 (no.8) (2012) 433-442.
\bibitem{b17} M. Bahri, and R. Ashino, Some properties of windowed linear canonical transform and its logarithmic uncertainty principle. International Journal of Wavelets, Multiresolution and Information Processing, 2016, 14(03).
\bibitem{b18} J. Ebling, and G. Scheuermann, Clifford Fourier transform on vector fields, IEEE Transactions on Visualization and Computer Graphics, 2005, 11, (4), pp. 469-479.
\bibitem{b19} E. Hitzer,and B. Mawardi, Clifford Fourier transform on multivector fields and uncertainty principles for dimensions, Advances in Applied Clifford Algebras, 2008, 18, (3-4), pp. 715-736.
\bibitem{b20} O. Akay, and G. F. Boudreaux-Bartels, Fractional convolution and correlation via operator methods and application to detection of linear FM signals, IEEE Transactions on Signal Processing, 2001, 49, (5), pp. 979-993.
\bibitem{b21} S. C. Pei, and J. J. Ding, Relations between fractional operations and time-frequency distributions and their applications, IEEE Transactions on Signal Processing, 2001, 49, (8), pp. 1638-1655.
\bibitem{b22} Z. C. Zhang, Sampling theorem for the short-time linear canonical transform and its applications. Signal Processing, 2015, 113: 138-146.
\bibitem{b23} H. Y. Huo, W. C. Sun, L. Xiao, Uncertainty principles associated with the offset linear canonical transform. Mathematical Methods in the Applied Sciences. 2019; 42(2): 466-474.
\end{thebibliography}
\end{document}